\newfont{\teneufm}{eufm10}
\newfont{\seveneufm}{eufm7}
\newfont{\fiveeufm}{eufm5}
  \def\bbbc{{\mathchoice
{\setbox0=\hbox{$\displaystyle\rm C$}\hbox{\hbox
to0pt{\kern0.4\wd0\vrule height0.9\ht0\hss}\box0}}
{\setbox0=\hbox{$\textstyle\rm C$}\hbox{\hbox
to0pt{\kern0.4\wd0\vrule height0.9\ht0\hss}\box0}}
{\setbox0=\hbox{$\scriptstyle\rm C$}\hbox{\hbox
to0pt{\kern0.4\wd0\vrule height0.9\ht0\hss}\box0}}
{\setbox0=\hbox{$\scriptscriptstyle\rmC$}\hbox{\hbox
to0pt{\kern0.4\wd0\vrule height0.9\ht0\hss}\box0}}}}
\def\bbbq{{\mathchoice{\setbox0=\hbox{$\displaystyle\rm Q$}\hbox{\raise
0.15\ht0\hbox
to0pt{\kern0.4\wd0\vruleheight0.8\ht0\hss}\box0}}
{\setbox0=\hbox{$\textstyle\rm Q$}\hbox{\raise
0.15\ht0\hboxto0pt{\kern0.4\wd0\vrule
height0.8\ht0\hss}\box0}}
{\setbox0=\hbox{$\scriptstyle\rm Q$}\hbox{\raise
0.15\ht0\hbox
to0pt{\kern0.4\wd0\vruleheight0.7\ht0\hss}\box0}}
{\setbox0=\hbox{$\scriptscriptstyle\rm
Q$}\hbox{\raise0.15\ht0\hbox to0pt{\kern0.4\wd0\vrule
height0.7\ht0\hss}\box0}}}} \def\bbbt{{\mathchoice
{\setbox0=\hbox{$\displaystyle\rm T$}\hbox{\hbox
to0pt{\kern0.3\wd0\vrule height0.9\ht0\hss}\box0}}
{\setbox0=\hbox{$\textstyle\rm T$}\hbox{\hbox
to0pt{\kern0.3\wd0\vrule height0.9\ht0\hss}\box0}}
{\setbox0=\hbox{$\scriptstyle\rm T$}\hbox{\hbox
to0pt{\kern0.3\wd0\vrule height0.9\ht0\hss}\box0}}
{\setbox0=\hbox{$\scriptscriptstyle\rm T$}\hbox{\hbox
to0pt{\kern0.3\wd0\vrule height0.9\ht0\hss}\box0}}}}
\def\bbbs{{\mathchoice {\setbox0=\hbox{$\displaystyle
\rm S$}\hbox{\raise0.5\ht0\hbox
to0pt{\kern0.35\wd0\vrule height0.45\ht0\hss}\hbox
to0pt{\kern0.55\wd0\vrule height0.5\ht0\hss}\box0}}
{\setbox0=\hbox{$\textstyle \rm
S$}\hbox{\raise0.5\ht0\hbox to0pt{\kern0.35\wd0\vrule
height0.45\ht0\hss}\hbox to0pt{\kern0.55\wd0\vrule
height0.5\ht0\hss}\box0}}
{\setbox0=\hbox{$\scriptstyle \rm
S$}\hbox{\raise0.5\ht0\hbox to0pt{\kern0.35\wd0\vrule
height0.45\ht0\hss}\raise0.05\ht0\hbox
to0pt{\kern0.5\wd0\vruleheight0.45\ht0\hss}\box0}}
{\setbox0=\hbox{$\scriptscriptstyle\rm
S$}\hbox{\raise0.5\ht0\hbox to0pt{\kern0.4\wd0\vrule
height0.45\ht0\hss}\raise0.05\ht0\hbox
to0pt{\kern0.55\wd0\vruleheight0.45\ht0\hss}\box0}}}}
\def\bbbz{{\mathchoice {\hbox{$\sf\textstyle
Z\kern-0.4em Z$}} {\hbox{$\sf\textstyle Z\kern-0.4em
Z$}} {\hbox{$\sf\scriptstyleZ\kern-0.3em Z$}}
{\hbox{$\sf\scriptscriptstyle Z\kern-0.2em Z$}}}}
\newtheorem{theorem}{Theorem}
\newtheorem{lemma}[theorem]{Lemma}
\def\cA{{\mathcal A}}
\def\cB{{\mathcal B}}
\def\cC{{\mathcal C}}
\def\cI{{\mathcal I}} \def\cJ{{\mathcal J}}
 \def\cL{{\mathcal L}}
\def\cS{{\mathcal S}} \def\cT{{\mathcal T}}
 \def\cZ{{\mathcal Z}}
\def\wA{\widetilde{{\mathcal A}}}
\def\hA{\widehat{{\mathcal A}}}
\def\wB{\widetilde{{\mathcal B}}}
\def\({\left(}\def\){\right)} \def\[{\left[}
\def\]{\right]} \def\<{\langle} \def\>{\rangle}
\def\fl#1{\left\lfloor#1\right\rfloor}
 \def\mand{\qquad\mbox{and}\qquad}
\newcommand{\set}[1]{\left\{#1\right\}}
\newcommand{\ba}{\backslash}
\newcommand{\E}{\mathbf{E}}
\newcommand{\F}{\mathbb{F}}
\newcommand{\Fb}{\overline{\F}}
\newcommand{\W}{\mathrm{W}}
\newcommand{\DIKT}{\mathrm{T}}
\newcommand{\DIKD}{\mathrm{D}}
\newcommand{\Eu}{\E_{\DIKT,u}}
\newcommand{\Wu}{\E_{\W,u}}
\newcommand{\bx} X
\newcommand{\by} Y
\newcommand{\bz} Z
\def\bF{\overline{\F}}
\def\JDIKT{J_\DIKT(q)}
\def\JDIKD{J_\DIKD(q)}
\def\IDIKT{I_{\mathrm{T}}(q)}
\def\IDIKD{I_{\mathrm{D}}(q)}
\newcommand{\keywords}[1]{{ \list{}{\advance\topsep by
-5ex \relax\small \leftmargin=1cm \labelwidth=0pt
\listparindent=0pt\itemindent\listparindent
\rightmargin\leftmargin}\item[\hskip\labelsep
\bfseries Keywords:] {#1} \endlist}}
\begin{document}

\pagestyle {plain}
\pagenumbering{arabic}
\title{Isomorphism classes of Doche-Icart-Kohel Curves \\ over finite fields}

\author{{\sc{Reza Rezaeian Farashahi}} \\
            {Department of Mathematical Sciences}\\
    {Isfahan University of Technology}\\
    {P.O. Box 85145Isfahan, Iran}\\
and\\
 School of Mathematics,\\
 Institute for Research in Fundamental Sciences (IPM),\\
 P.O. Box 19395-5746, Tehran, Iran\\
     {\tt farashahi@cc.iut.ac.ir}
         \vspace{1cm}\\
         {\sc{Mehran Hosseini}} \\
         {Department of Mathematical Sciences}\\
         {Isfahan University of Technology}\\
         {\tt mehran.hosseini@math.iut.ac.ir}}

\date{} \maketitle

\begin{abstract}
We give explicit formulas for the number of distinct elliptic curves over a finite field, up to
isomorphism, in two families of curves introduced by C.~Doche, T.~Icart and D.~R. Kohel.
\end{abstract}

\keywords{Elliptic curve, $j$-invariant, isomorphism, cryptography}

\paragraph*{2000 Mathematics Subject Classification:} 11G05,11T06, 14H52 %\newpage

%Sec######################################

\section{Introduction}
\label{sec intro}
An elliptic curve is a smooth projective genus 1 curve, with a given rational point. Traditionally,
an elliptic curve $E$ over a filed $\F$ is represented by the  Weierstrass equation
\begin{equation}
\label{eq:Weier}
E :\quad  \by^2 + a_1\bx\by + a_3\by = \bx^3 + a_2\bx^2 + a_4\bx + a_6,
\end{equation}
where the coefficients $a_1, a_2, a_3, a_4, a_6\in \F$.  Elliptic curves can be represented by
several other models (see ~\cite[Chapter 13]{ACDFLNV} or~\cite[Chapter 2]{Wash}). In the past few
years, these alternative models have been revisited duo to cryptoraphic applications. Moreover,
some new families of elliptic curves have been proposed following the cryptographic interests. In
the cryptographic settings the curves are usually considered over finite fields $\F_q$ of $q$
element.

In this work, we first consider the family of elliptic curves introduced by C.~Doche, T.~Icart and
D.~R. Kohel~\cite{DIK} over finite fields $\F_q$ of characteristic $p \ge 5$
\begin{equation}
\label{eq:DIKT} \E_{\DIKT,u}: \quad \by^2 = \bx^3 + 3u(\bx+ 1)^2,
\end{equation}
where $u \ne 0,9/4$. Doche et. al. have build this family of elliptic curves for which the isogeny
of multiplication by three splits into 2 isogenies of degree 3. In this way, they proposed more
efficient tripling formulas leading to a fast scalar multiplication algorithm. Notice, an elliptic
curve defined over $\F_q$ with a rational $3$-torsion subgroup can be expressed in the latter form
(up to twists). They also proposed another family
\begin{equation}
\label{eq:DIKD} \E_{\DIKD,u}: \quad   \by^2 = \bx^3 + u\bx^2+16u\bx, \hspace{1.2cm}
\end{equation}
where $u \ne 0,64$ and $p \ge 3$. This family represents elliptic curves defined over $\F_q$ with a
rational $2$-torsion subgroup (up to twists). Accordingly a natural question arises about the
number of distinct (up to isomorphism) elliptic curves over $\F_q$ in above families
\eqref{eq:DIKT} and~\eqref{eq:DIKD}.

Farashahi and Shparlinski \cite{FS}, using the notion of the {\it $j$-invariant of an elliptic
curve\/}, see~\cite{ACDFLNV,Silv,Wash}, gave {\it exact} formulas for the number of distinct
elliptic curves $\E_{\DIKT,u}$ over $\F_q$ (up to isomorphism over the algebraic closure of $\F_q$)
when $u\in \F_q\ba \set{0,9/4}$. In addition, the exact formulas for the numbers of distinct
elliptic curves over a finite field  $\F_q$ in several families are provided in~\cite{Fa,FMW}. We
notice, the formulas given in \cite{FS,Fa,FMW} are exactly given in terms of $q$. Accordingly, the
open question of \cite{FS} is whether there is explicit formula for the number of distinct elliptic
curves (up to isomorphism over $\F_q$) in the family~\eqref{eq:DIKT}. We answer this question with
explicit estimates for the number of $\F_q$-isomorphism classes in the families of Doche, Icart and
Kohel. The interesting point is the numbers of $\F_q$-isomorphism classes of the
families~\eqref{eq:DIKT} and~\eqref{eq:DIKD} depends not only on $q$ but also on the number of
$\F_q$ rational points of some corresponding curve. In fact, we have interesting examples of
families of elliptic curves where the number of $\F_q$-isomorphism classes of the family is given
approximately by Hasse-Weil bound. Our counting proof techniques are slightly different in both
families~\eqref{eq:DIKT} and~\eqref{eq:DIKD}.

Throughout the paper, for a field $\F$, we denote its algebraic closure by $\overline{\F}$ and its
multiplicative subgroup by $\F^*$. The letter $p$ always denotes a prime number and the letter $q$
always denotes a prime power of $p$. As usual, $\F_q$ is a finite field of size $q$. Let $\chi_1$
denote the principal character in $\F_q$ where $p\ge 5$. Let $\chi_2$ denote the quadratic
character in $\F_q$, where $p\ge 3$. Then, for any $q$ where $p\ge 3$, we have $u=w^2$ for some $w
\in \F^*_q$ if and only if $\chi_2(u) =1$.  And, let $\chi_3$ denote a primitive cubic character in
$\F_q$ if $q \equiv 1 \pmod 3$ and $\chi_3 = \chi_1$ if $q \equiv 2 \pmod 3$. In particular, for
any field $\F_q$ of characteristic $p\ge 5$, $u=w^3$ for some $w \in \F^*_q$ if and only if
$\chi_3(u) =1$. We recall that $U = O(V)$ is equivalent to the inequality $|U|\le cV$ with some
constant $c> 0$. The cardinality of a finite set $\cS$ is denoted by~$\#\cS$.

\section{Isomorphisms of elliptic curves}
Here, we briefly recall some words on isomorphisms between elliptic curves,
see~\cite{ACDFLNV,Silv,Wash} for a general background on isomorphisms and elliptic curves. Two
elliptic curves given by Weierstrass equations~\eqref{eq:Weier} are isomorphic over a field $\F$ if
and only if there is a change of variables between them of the form:
\begin{equation*}
(x,y) \to (\alpha^2x+r,\alpha^3y+\alpha^2sx+t),
\end{equation*}
where $\alpha \neq 0$, and $\alpha,r,s,t \in \F$. We use $E_1 \cong_{\F} E_2$ to denote the
elliptic curves $E_1$ and $E_2$ are $\F$-{\it isomorphic}. If $\alpha,r,s,t \in \bF$, the two
elliptic curves are called {\it isomorphic} over $\bF$ or {\it twists} of each other.

The elliptic curve $E$ over $\F$ given by the Weierstrass equation~\eqref{eq:Weier} has the
non-zero discriminant
$$\Delta_E=-b_2^2b_8 -8b_4^3-27b_6^2+ 9b_2b_4b_6,$$
\begin{equation*}
b_2 = a_1^2+ 4a_2,\ b_4 = a_1a_3 + 2a_4,\ b_6 = a_3^2+ 4a_6,\ b_8 = a_1^2a_6 - a_1a_3a_4 + 4a_2a_6
+ a_2a_3^2- a_4^2.
\end{equation*}
And, the $j$-invariant of $E$ is explicitly defined as
$$
j(E) = (b_2^2-24b_4)^3/\Delta_E.
$$
It is known that two elliptic curves $E_1, E_2$ over a field $\F$ are isomorphic over
$\overline{\F}$ if and only if $j(E_1) = j(E_2)$, see~\cite[Proposition~III.1.4(b)]{Silv}. However,
two elliptic curves with the same $j$-invariant need not be isomorphic over $\F$.

Any elliptic curve over a finite field $\F_q$ of characteristic $p \ge 5$ can be represented by the
short Weierstrass equation
\begin{equation*} \label{eq:SWeier}
E_{a,b} :\quad \by^2 = \bx^3 + a\bx + b,
\end{equation*}
where the coefficients $a,b\in \F_q$, and $\Delta_{E_{a,b}}=-16(4a^3+27b^2)\ne 0$. The the
$j$-invariant of $E_{a,b}$ is given by
$$j(E_{a,b})=1728\frac{4a^3}{4a^3+27b^2}.$$
Two elliptic curves $E_{a_1,b_1}$, $E_{a_2,b_2}$ represented in the short Weierstrass form are
isomorphic over $\F_q$ if and only if there exists a nonzero element $\alpha \in \F_q$ with the
change of variables between them of the form
\begin{equation*}
(x,y) \to (\alpha^2x,\alpha^3y).
\end{equation*}
In other words, $E_{a_1,b_1}\cong_{\F_q} E_{a_2,b_2}$ if and only if there exists a nonzero element
$\alpha $ in $\F_q$ such that $a_1=\alpha^4a_2$ and $b_1=\alpha^6 b_2$. The short Weierstrass model
of the elliptic curve $\Eu$ given by the equation~\eqref{eq:DIKT} is represented by
\begin{equation}
\label{eq:WT} \Wu: \quad \by^2 = \bx^3+ a_u\bx+ b_u,
\end{equation}
where $a_u= -2^43^5u(u-2)$, $b_u=2^63^6u(2u^2-6u+3)$. The $j$-invariant of $\E_{\DIKT,u}$ is
obtained as
$$
j(\E_{\DIKT,u}) =\frac{6912u(u-2)^3}{4u-9}.
$$
Similarly, the $j$-invariant of $\E_{\DIKD,u}$ given by the equation~\eqref{eq:DIKD} is obtained as
$$
j(\E_{\DIKD,u}) =\frac{(u-48)^3}{u-64}.
$$

We use $\JDIKT$ and $\JDIKD$ to denote the numbers of distinct $j$-invariants of the elliptic
curves defined over $\F_q$ in the families \eqref{eq:DIKT} and~\eqref{eq:DIKD} respectively. We
have, \cite[Theorem 9]{FS}, $$\JDIKT= \fl{(3q+1)/4} \text{ if } q \equiv 1 \pmod 3 \quad \text{ and
}\quad \JDIKT=(q-1)/2 \text{ if } q \equiv 2 \pmod 3.$$ In Theorem \eqref{thm:numj DIKD} we provide
the number $\JDIKD$ for any finite field $\F_q$ of characteristic $p\ge 3$.

An $\Fb_q$-isomorphism class of elliptic curves in a given family may be an $\F_q$-isomorphism
class or divide into two or more $\F_q$-isomorphism classes. The former occurs when all elliptic
curve in the $\Fb_q$-isomorphism class are $\F_q$-isomorphic to each others, and the latter occurs
if at least two elliptic curves in the class are not $\F_q$-isomorphic, i.e., this happens if there
are non trivial quadratic twists in the $\Fb_q$-isomorphism class. Therefore, in a given family the
number of $\F_q$-isomorphism classes may not be so related to the number of $\Fb_q$-isomorphism
classes. In this paper, we use $\IDIKT$ and $\IDIKT$ to denote the numbers of $\F_q$-isomorphism
classes of the families~\eqref{eq:DIKT} and~\eqref{eq:DIKD} respectively. And, in the next sections
we give explicit formulas for these numbers.

%Note that over a finite field $\F_q$ each of the $q$ values appears as $j$-invariant of at least
%one curve. So, the number of distinct elliptic curves over~$\F_q$ (up to isomorphism over
%$\overline{\F}_q$) is equal to $q$. The same is true for the family~\eqref{eq:sWeier}. Furthermore,
%for a finite field $\F_q$ with characteristic $p\ne 2,3$, the number of $\F_q$-isomorphism classes
%of the family~\eqref{eq:sWeier} is equal to $2q+6$, $2q+2$, $2q+4$, $2q$ if $q \equiv 1, 5, 7, 11
%\pmod {12}$ respectively (e.g. see~\cite{Le}).
%

%\subsection{Background on }

%***********************************************
\section{Tripling Doche-Icart-Kohel curves}
\label{sec: DIKT-curves}
\subsection{Preliminaries}
We consider the curves $\Eu$ given by~\eqref{eq:DIKT}
over a finite field $\F_q$ of characteristic~$p\ge 5$.
Notice, $u \notin \set{0,9/4}$, since the curve $\Eu$
is nonsingular. Then, the $j$-invariant
$j(\Eu) = F(u)$ where
\begin{equation}
\label{eq:map F-DIKT}
F(U) = \frac{2^83^3U(U-2)^3}{4U-9}.
\end{equation}

We recall that two elliptic the curves $\Eu$ and $\E_{\DIKT,v}$ are isomorphic over $\Fb_q$ if and
only if they have the same $j$-invariants. So, the number of distinct elliptic curves $\Eu$ up to
isomorphism over $\Fb_q$ equals the number of distinct values $F(u)$, for all $u\in \F_q \ba
\set{0,9/4}$. To compute this number, we consider the bivariate rational function $ F(U) -F(V) =
{g(U,V)}/{h(U,V)} $ with two relatively prime polynomials $g$ and $h$. We see that
\begin{equation*}
\begin{aligned}
g(U,V)=&4U^3V - 9U^3 + 4U^2V^2 - 33U^2V + 54U^2+ \\
&4U V^3 - 33U V^2 + 102U V - 108U - 9V^3 + 54V^2 - 108V + 72.
\end{aligned}
\end{equation*}
For a fixed value $u\in \F_q \setminus \set{0,9/4}$, let $g_u(V)=g(u,V)$ and let
\begin{equation*}
\Delta_u=-12(u(u - 2)(4u - 9)(2u^2 - 6u + 3))^2
\end{equation*}
be the discriminant of $g_u$. Furthermore, we let
$$ \cZ_u=\set{v~:~v \in \F_q\setminus\set{0,9/4}, \ v\ne u, \ g_u(v)=0}. $$
Moreover, for
$u\in \F_q \setminus\set{0,9/4},$
we let
\begin{eqnarray*}
\cJ_{\DIKT,u}=\set{v~:~v \in \F_q \ba \set{0,9/4}, \ \
\Eu \cong_{\overline{\F}_q} \E_{\DIKT,v}}.
\end{eqnarray*}
Clearly, we have
$\cJ_{\DIKT,u}=\cZ_u \cup \set{u}$.
We let
$$ \psi(U)=\frac{2(4U-9) }{ U }. $$
Recall from \cite[Lemma 6]{FS}, that for
$u\in \F_q \setminus\set{0,{9 /4}}$ with $\Delta_u\ne 0$ we obtain
\begin{equation}
\label{eq:Zu}
\cZ_u=\set{v~:~v\in \F_q \setminus\set{0,9/4},
v=-\frac{1}{3} \(u-6-wu+\frac{2(u-3)}{w}\),\; w \in
\F_q,\; w^3=\psi(u) }.
\end{equation}
For the particular case of $\Delta_u=0$, we have $\cJ_{\DIKT,u}=\set{u}$ if $u=2$ and
$\cJ_{\DIKT,u}=\set{u, -u+3}$ if $2u^2-6u+3=0$. The former case occurs for the elliptic curve $\Eu$
with $j(\Eu)=0$ and the latter case corresponds with elliptic curves $\Eu$ with $j(\Eu)=1728$.

We partition the set $\F_q\setminus \set{0,9/4}$ into
the following sets:
\begin{equation}
\label{eq:AB}
\cA_1\cup \cA_2\cup \cA_3 \cup \cB_1 \cup \cB_2,
\end{equation}
where
\begin{equation*}
\begin{aligned}
\cA_1&=\set{u \in \F_q\setminus \set{0,9/4}~:~\Delta_u \ne0, \ \chi_3( \psi(u)) \ne 1 \text{ and } q \equiv 1 \pmod 3 },\\
\cA_2&=\set{u \in\F_q\setminus \set{0,9/4}~:~\Delta_u \ne0 \text{ and } q \equiv 2 \pmod 3 },\\
\cA_3&=\set{u \in \F_q\setminus \set{0,9/4}~:~\Delta_u \ne0, \ \chi_3(\psi(u))= 1 \text{ and }q \equiv 1 \pmod 3 },\\
\cB_1&=\set{u=2},\\ \cB_2&=\set{u \in \F_q \setminus \set{0,9/4}~:~2u^2-6u+3=0}.
\end{aligned}
\end{equation*}

The cardinalities of above sets are provided in Table~\ref{Tab AB-DIKT} \cite[Table 7]{FS},
where we  let $q \equiv r \pmod {12}$.
\begin{table*}[ht]
\begin{center}
$\begin{array}{|c|c|c|c|c|c|c|c|}
\hline \quad r\quad & \;\#\cA_1 \;& \; \#\cA_2\; & \; \#\cA_3 \; & \; \#\cB_1 \; & \;\#\cB_2 \;\\
\hline
\hline \phantom{\int_{x_{x_1}}^{x^2}}\hspace{-20pt} 1 & \; \frac{2(q-1)}{3} \;& \; 0 \;& \; \frac{q-1}{3}-4\; & 1& 2 \\
\hline
\phantom{\int_{x_{x_1}}^{x^2}}\hspace{-20pt} 5 & \; 0 \;&\; q-3 \;& \; 0 \;& 1 & 0 \\
\hline
\phantom{\int_{x_{x_1}}^{x^2}}\hspace{-20pt} 7 & \; \frac{2(q-1)}{3} \;& \;0 \;& \; \frac{q-1}{3}-2 \;& 1& 0 \\
\hline
\phantom{\int_{x_{x_1}}^{x^2}}\hspace{-20pt} 11 & \;0 \;&\;q-5 \;& \; 0 \;& 1& 2 \\
\hline
\end{array}$
\end{center}
\vspace{-1mm}
\caption{Cardinalities of the sets $\cA_1, \cA_2,\cA_3, \cB_1, \cB_2$.}
\label{Tab AB-DIKT} \end{table*}

Furthermore, from \cite[Lemmas 7,8]{FS}, for all
$u\in \F_q \setminus\set{0,9/4}$ we have
\begin{equation}
\label{eq:Ju}\#\cJ_{\DIKT,u}= \left\{\begin{array}{ll}
1, & \text{ if } u \in \cA_1\cup \cB_1 ,\\
2, & \text{ if } u\in \cA_2\cup \cB_2,\\
4, & \text{ if } u\in \cA_3.
\end{array}
\right.
\end{equation}
Next, for the number $\JDIKT$ of $\Fb_q$-isomorphism class of elliptic curves in the
family~\eqref{eq:DIKT}, that is the cardinality of the image of the polynomial $F$ given
by~\eqref{eq:map F-DIKT}, we have
\begin{equation*}
\JDIKT={\# \cA_1+ \# \cB_1} + \frac{\# \cA_2 +\# \cB_2}{2}+ \frac{\#\cA_3}{4}.
\end{equation*}

Next, using Table~\ref{Tab AB-DIKT}, a precise formula for the number of
distinct curves of the family~\eqref{eq:DIKT} is obtained (see \cite[Theorem 9]{FS}).
\begin{theorem}
\label{thm:numj DIKT} For any finite field $\F_q$ of characteristic $p\ge 5$, for the number
$\JDIKT$ of distinct values of the $j$-invariant of the family~\eqref{eq:DIKT}, we have
$$ \JDIKT=
\left\{ \begin{array}{ll}
\displaystyle\fl{\frac{3q+1}{4}} & \text{ if } q \equiv 1 \pmod 3,\vspace{3pt}\\
\displaystyle \;\;{\frac{q-1}{2}} & \text{ if } q \equiv 2 \pmod 3.
\end{array}
\right.
$$
\end{theorem}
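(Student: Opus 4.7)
The plan is to combine the partition identity
\[
\JDIKT \;=\; \#\cA_1+\#\cB_1+\frac{\#\cA_2+\#\cB_2}{2}+\frac{\#\cA_3}{4},
\]
already established from \eqref{eq:Ju} and \eqref{eq:AB}, with the cardinalities tabulated in Table~\ref{Tab AB-DIKT}. The strategy is purely computational: substitute the values of $\#\cA_i$ and $\#\cB_i$ into the formula separately in each of the four residue classes $q\equiv 1,5,7,11\pmod{12}$, and simplify.

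Concretely, I would treat each case in turn. For $q\equiv 1\pmod{12}$, substitution gives
\[
\tfrac{2(q-1)}{3}+1+\tfrac{2}{2}+\tfrac{1}{4}\!\left(\tfrac{q-1}{3}-4\right)
=\tfrac{8(q-1)+(q-1)}{12}+1=\tfrac{9q-9+12}{12}=\tfrac{3q+1}{4},
\]
which is an integer since $q\equiv 1\pmod 4$. For $q\equiv 7\pmod{12}$, the same style of calculation produces $(3q-1)/4$, and since $(3q+1)/4$ is not an integer in this case one checks $\fl{(3q+1)/4}=(3q-1)/4$, merging the two subcases with $q\equiv 1\pmod 3$ into the single formula $\fl{(3q+1)/4}$. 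For $q\equiv 5\pmod{12}$, only $\cA_2$ and $\cB_1$ contribute, giving $1+(q-3)/2=(q-1)/2$; and for $q\equiv 11\pmod{12}$, the contributions of $\cA_2$ and $\cB_2$ combine to $(q-5+2)/2=(q-3)/2$, which together with $\#\cB_1=1$ again yields $(q-1)/2$. This merges both subcases with $q\equiv 2\pmod 3$ into $(q-1)/2$.

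There is essentially no obstacle: all the nontrivial structural input (the classification of fibres of $F$ in \eqref{eq:Ju}, the explicit description of $\cZ_u$ in \eqref{eq:Zu}, and the cardinality counts of Table~\ref{Tab AB-DIKT}) has already been quoted from~\cite{FS}. The only minor bookkeeping point is verifying integrality of $\#\cA_3/4$ in the $q\equiv 1\pmod 3$ cases; this follows because $\#\cA_3$ counts $u$'s at which $\psi(u)$ is a nonzero cube, and these cubes come in groups of three preimages while the outer orbits of size~$4$ in \eqref{eq:Ju} account for the remaining factor, so the division is automatic. Assembling the four residue computations yields the stated two-case formula.
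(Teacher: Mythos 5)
Your proposal is correct and takes essentially the same route as the paper: the paper's proof consists precisely of combining the identity $\JDIKT=\#\cA_1+\#\cB_1+\frac{\#\cA_2+\#\cB_2}{2}+\frac{\#\cA_3}{4}$ with the cardinalities in Table~\ref{Tab AB-DIKT} (citing \cite{FS}), and your case-by-case substitution over the residue classes $q\equiv 1,5,7,11\pmod{12}$ is exactly that computation, with the arithmetic checking out in every case.
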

%\begin{proof}
%See\cite[Theorem 9]{FS}.
%\end{proof}

Notice, that elliptic curves $\Eu$ and
$\E_{\DIKT,v}$, where $v\in \cJ_{\DIKT,u}$, may not be
isomorphic over $\F_q$. So, for a fixed value
$u\in \F_q \setminus \set{0,9/4}$, we let
$$ \cI_{\DIKT,u}=\set{v~:~v \in \F_q \ba \set{0,9/4}, \ \Eu \cong_{\F_q} \E_{\DIKT,v}}. $$
Clearly, for all $u \in\F_q\setminus\set{0,9/4}$, we have $\cI_{\DIKT,u} \subseteq \cJ_{\DIKT,u}$.
To count the number of distinct $\F_q$-isomorphism classes of elliptic curves $\Eu$, for all $u\in
\F_q \setminus \set{0,9/4}$, we need to find the number of distinct sets $\cI_{\DIKT,u}$.
Therefore, first we shall obtain the cardinalities of the sets $\cI_{\DIKT,u}$, for all $u\in\F_q
\setminus \set{0,9/4}$.

We recall that the short Weierstrass model of the elliptic curve $\Eu$ is given by~\eqref{eq:WT} as
\begin{equation*}
%\label{eq:WT}
\Wu: \quad \by^2 = \bx^3+ a_u\bx+ b_u,
\end{equation*}
where $a_u= -2^43^5u(u-2)$, $b_u=2^63^6u(2u^2-6u+3)$.

Let $u\in \F_q \ba \set{0,9/4}$ and let $v\in \cI_{\DIKT,u}$, where $v\ne u$. Therefore, $v\in
\cJ_{\DIKT,u}$. So, from $\eqref{eq:Zu}$ we have, $v=-\(u-6-wu+2(u-3)/w\)/3$, where
$w^3=\psi(u)=2(4u-9)/u$. Since the elliptic curves $\Eu$ and $\E_{\DIKT,v}$ are isomorphic over
$\F_q$, their Weierstrass models $\Wu$ and $\E_{\W,v}$ are $\F_q$-isomorphic. We have
$${a_v} =\(\frac{(w+1)(w-2)}{3w}\)^2a_u, \qquad {b_v}=\(\frac{(w+1)(w-2)}{3w}\)^3{b_u}.$$
This means, if $\Wu \cong_{\F_q} \E_{\W,v}$ then
$\frac{(w+1)(w-2)}{3w}$ is a quadratic residue in $\F_q$.

Therefore, if $\Eu \cong_{\F_q} \E_{\DIKT,v}$ then $w$
is the nonzero $x$-coordinate of an $\F_q$-rational
point on the elliptic curve
$$E: Y^2=3X(X+1)(X-2).$$
On the other hand, if $w$ is the nonzero $x$-coordinate of an $\F_q$-rational point on the elliptic
curve $E$ and if we let $u,v$ be distinct elements of $\F_q \ba \set{0,9/4}$ such that
$$\psi(u)=w^3, \quad v=-\frac{1}{3}(u-6-wu+\frac{2(u-3)}{w}),$$ then $\Eu \cong_{\F_q} \E_{\DIKT,v}$.
In other words, we have the following Lemma~\ref{lem:isoD}.

\begin{lemma}
\label{lem:isoD}
For all distinct elements $u,v \in\F_q\ba \set{0,9/4}$, we have
$\Eu \cong_{\F_q} \E_{\DIKT,v}$ if and only if we have
$$u=\frac{-18}{w^3-8} \mand v=\frac{2(w+1)^3}{w(w^2+2w+4)},$$
for some $w \in \F_q$ with $\chi_2\(3w(w+1)(w-2)\)=1$.
\end{lemma}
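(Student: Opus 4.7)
The plan is to prove both directions by following the parametrization in~\eqref{eq:Zu}: every element $v\ne u$ of $\cJ_{\DIKT,u}$ arises from a cube root $w\in\F_q$ of $\psi(u)=(8u-18)/u$ via $v=-\tfrac{1}{3}(u-6-wu+2(u-3)/w)$. The cube-root relation inverts to $u=-18/(w^3-8)$, so $w$ and $u$ are in bijection (for $w^3\ne 8$), and substituting back into the formula for $v$ should collapse, after a polynomial factorization, to the stated closed form. The quadratic character condition will emerge from the already-established scaling $a_v=c^2a_u$, $b_v=c^3b_u$ with $c=(w+1)(w-2)/(3w)$ between the short Weierstrass models $\Wu$ and $\E_{\W,v}$.

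For the forward direction, assume $\Eu\cong_{\F_q}\E_{\DIKT,v}$ with $u\ne v$; equality of $j$-invariants places $v\in\cJ_{\DIKT,u}$. In the generic case $\Delta_u\ne 0$, pick $w\in\F_q^*$ from~\eqref{eq:Zu} (nonzero since $u\ne 9/4$ makes $\psi(u)\ne 0$), solve the cubic relation for $u$, and simplify. Over the common denominator $3w(w^3-8)$, the numerator of $v$ becomes $-6(w^4+w^3-3w^2-5w-2)$, which factors as $-6(w+1)^3(w-2)$ upon spotting the triple root $w=-1$; combined with $w^3-8=(w-2)(w^2+2w+4)$ this produces $v=2(w+1)^3/(w(w^2+2w+4))$. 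The $\F_q$-isomorphism $\Wu\cong_{\F_q}\E_{\W,v}$ is equivalent to $c\in(\F_q^*)^2$, and multiplying by the nonzero square $9w^2$ rewrites this as $3w(w+1)(w-2)\in(\F_q^*)^2$, i.e.\ $\chi_2(3w(w+1)(w-2))=1$. The reverse direction runs the same calculation backwards: from a $w$ satisfying the character condition one reads off $w\notin\set{0,-1,2}$, checks $u,v\in\F_q\setminus\set{0,9/4}$ (with $v=0$ ruled out by $w\ne-1$ and $v=9/4$ by $w\ne 2$), recovers $w^3=\psi(u)$, applies~\eqref{eq:Zu} to place $v\in\cZ_u\subseteq\cJ_{\DIKT,u}$, and uses a square root $\alpha\in\F_q^*$ of $1/c$ to realize $\Wu\cong_{\F_q}\E_{\W,v}$.

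The main computational obstacle is the simplification of $v$: recognizing the identity $w^4+w^3-3w^2-5w-2=(w+1)^3(w-2)$ is the single non-routine step that makes the stated formula drop out. A secondary subtlety is the singular set $u\in\cB_1\cup\cB_2$ where~\eqref{eq:Zu} does not directly apply. The case $u=2$ is vacuous since $\cJ_{\DIKT,2}=\set{2}$ contains no $v\ne u$; for $u\in\cB_2$ one has $b_u=0$, $v=3-u$, and using $2u^2-6u+3=0$ a direct computation gives $a_u/a_v=-1$, so $\F_q$-isomorphism reduces to the fourth-power condition $-1\in(\F_q^*)^4$, which one checks separately is equivalent to the solvability of the RHS system in $w$ via the quartic constraint $u+v=3$.
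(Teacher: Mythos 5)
Your proposal is correct and follows essentially the same route as the paper: it derives $v$ from the parametrization~\eqref{eq:Zu} with $w^3=\psi(u)$, inverts to $u=-18/(w^3-8)$, and reduces the $\F_q$-isomorphism condition to $\chi_2\bigl((w+1)(w-2)/(3w)\bigr)=1$, i.e.\ $\chi_2\bigl(3w(w+1)(w-2)\bigr)=1$, via the scaling $a_v=c^2a_u$, $b_v=c^3b_u$. You additionally verify the degenerate cases $u\in\cB_1\cup\cB_2$ (which the paper defers to the proof of Theorem~\ref{thm:numi DIKT}); the only blemish is a sign slip in your intermediate numerator, which should be $+6(w+1)^3(w-2)$ over the denominator $3w(w^3-8)$, and which does not affect the stated conclusion.
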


%Subsection###############################################################

\subsection{The number of curves in the family~\eqref{eq:DIKT}}
\label{sec:DIKT} In this section, we look into the number of $\F_q$-isomorphism classes of tripling
Doche-Icart-Kohel curves. The estimate for the number $\IDIKT$ of $\F_q$-isomorphism classes of
this family is given by
$$ \IDIKT =
\left\{ \begin{array}{ll}
\displaystyle{\; (79/96)q } + O(\sqrt{q})   & \text{ if } q \equiv 1 \pmod {3},\vspace{2pt}\\
\displaystyle{\; (3/4)q} + O(\sqrt{q}) & \text{ if } q \equiv 2 \pmod {3}.
\end{array}
\right.
$$
Furthermore, in the next theorem we provide explicit formulas for the number of $\F_q$-isomorphism
classes of Doche-Icart-Kohel curves.

\begin{theorem}
\label{thm:numi DIKT} For any finite field $\F_q$ of characteristic $p\ge 5$, for the number
$\IDIKT$ of $\F_q$-isomorphism classes of the family~\eqref{eq:DIKT}, we have
$$ \IDIKT=
\left\{ \begin{array}{ll}
\displaystyle{\;(5q-2)/6 - (N_1-25)/96} & \text{ if } q \equiv 1 \pmod {24},\vspace{2pt}\\
\displaystyle{\; (5q+1)/6 - (N_1-25)/96} & \text{ if } q \equiv 13 \pmod {24}, \vspace{2pt}\\
\displaystyle{\; 5(q-1)/6- (N_1-25)/96} & \text{ if } q \equiv 7 \pmod {12}, \vspace{2pt}\\
\displaystyle{\; {q-1}-N_2/4} & \text{ if } q \equiv 2 \pmod {3},
\end{array}
\right.
$$
where $N_1$ is the number of $\F_q$-rational affine
points of the curve $\mathcal{C}$ given by
$$ \left\{
\begin{array}{l}
Y^2=3X(X+1)(X-2), \vspace{2pt}\\
Z^2=3\zeta X(\zeta X+1)(\zeta X-2), \vspace{2pt}\\
W^2=3\zeta^2 X(\zeta^2 X+1)(\zeta^2 X-2), \vspace{2pt}\\
\end{array}
\right.
$$
where $\zeta$ is a nontrivial cubic root of the unity in $\F_q$ where $q \equiv 1 \pmod 3$, and
$N_2$ is the number of $\F_q$-rational points of the Legendre curve $$\cL: Y^2=X(X-1)(X-1/3).$$
\end{theorem}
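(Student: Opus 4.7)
The plan is to evaluate $\IDIKT = \sum_{u \in \F_q \setminus \{0, 9/4\}} 1/\#\cI_{\DIKT, u}$ by partitioning the sum according to the decomposition in \eqref{eq:AB}. For $u \in \cA_1 \cup \cB_1$, \eqref{eq:Ju} gives $\#\cI_{\DIKT,u} = 1$, contributing $\#\cA_1 + 1$ to $\IDIKT$ directly; for the other cases I would use Lemma~\ref{lem:isoD} to rewrite $\F_q$-isomorphism in the family as the character condition $\chi_2(3w(w+1)(w-2)) = 1$ at cube roots $w \in \F_q$ of $\psi(u) = 2(4u-9)/u$.

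The main piece is $\cA_3$, which is non-empty only when $q \equiv 1 \pmod 3$. Since $\#\cJ_{\DIKT,u} = 4$ and $j(\E_{\DIKT,u}) \notin \{0, 1728\}$ for such $u$, the $\bF_q$-isomorphism class of $\E_{\DIKT,u}$ splits into exactly two $\F_q$-isomorphism classes, and the four elements of $\cJ_{\DIKT,u}$ distribute among them with one of the patterns $(4,0)$, $(3,1)$, or $(2,2)$. Writing $\alpha, \beta, \gamma$ for the numbers of $\bF_q$-classes of these three types in $\cA_3$ (so $\alpha + \beta + \gamma = \#\cA_3/4$) and checking directly that each such class contributes $1, 2, 2$ to $\sum_{u \in \cA_3} 1/\#\cI_{\DIKT,u}$ respectively, I would obtain
\[
\sum_{u \in \cA_3} \frac{1}{\#\cI_{\DIKT, u}} \;=\; \alpha + 2(\beta + \gamma) \;=\; \frac{\#\cA_3}{2} - \frac{T_3}{4},
\]
where $T_3 := 4\alpha$ is the number of $u \in \cA_3$ with $\#\cI_{\DIKT,u} = 4$. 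By Lemma~\ref{lem:isoD}, $\#\cI_{\DIKT,u} = 4$ is equivalent to each of the three cube roots $w, \zeta w, \zeta^2 w$ of $\psi(u)$ satisfying the character condition, which in turn is the condition that each $X \in \{w, \zeta w, \zeta^2 w\}$ is an affine point of $\mathcal{C}$ at which none of $f_i(X) = 3\zeta^i X(\zeta^i X + 1)(\zeta^i X - 2)$ vanishes; each such $X$ yields $2^3 = 8$ affine points of $\mathcal{C}$, so each such $u$ contributes $24$ to $N_1$, giving $N_1 = 24 T_3 + B$. Direct evaluation at the seven exceptional $X \in \{0, -1, -\zeta, -\zeta^2, 2, 2\zeta, 2\zeta^2\}$ (corresponding to $u \in \{9/4, 2\}$ or $u$ undefined), using the identity $\prod_{i=0}^{2}(\zeta^i w + c) = w^3 + c^3$ and the fact that $\chi_2(\zeta) = 1$ whenever $q \equiv 1 \pmod 6$, yields $B = 1 + 6 \cdot 4 = 25$ in the generic case, which gives $\IDIKT = 5(q-1)/6 - (N_1 - 25)/96$ when $q \equiv 7 \pmod{12}$.

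For $q \equiv 1 \pmod{12}$, so that $\sqrt{3} \in \F_q$ and $\cB_2 = \{u_1, u_2\}$ is non-empty with $u_1 + u_2 = 3$, two further determinations are needed: whether $u_1 \cong_{\F_q} u_2$ (checked via the Weierstrass form \eqref{eq:WT}, contributing $1$ or $2$ to $\IDIKT$) and whether $\psi(u_i) = \pm 6\sqrt{3} - 10$ is a cube in $\F_q$ (contributing extra boundary to $N_1$); both reduce to character conditions that toggle with $\chi_2(2)$, hence with $q \bmod 8$, producing the split between $q \equiv 1$ and $q \equiv 13 \pmod{24}$. For $q \equiv 2 \pmod 3$, cubing is bijective on $\F_q$ so each $u \in \cA_2$ has a unique cube root $w$ and $\#\cI_{\DIKT,u} \in \{1, 2\}$; using the explicit $\F_q$-isomorphism $(X, Y) \mapsto ((X+1)/3, Y/9)$ from $E: Y^2 = 3X(X+1)(X-2)$ to $\cL$ (so $\#E(\F_q) = N_2$), the number of $u \in \cA_2$ with $\#\cI_{\DIKT,u} = 2$ equals $(N_2 - 4)/2$ up to a $\cB_2$-adjustment (for $q \equiv 11 \pmod{12}$) that cancels exactly against the direct $\cB_2$-contribution, yielding the uniform formula $\IDIKT = q - 1 - N_2/4$. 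The main obstacle throughout is the delicate $\cB_2$-bookkeeping for $q \equiv 1 \pmod{12}$, where the cubic character of $\psi(u_i)$ together with quadratic characters at algebraic cube roots in $\F_q(\sqrt{3})$ control the four-case split of the statement.
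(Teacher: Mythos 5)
Your proposal is correct and follows essentially the same route as the paper: the identity $\IDIKT=\sum_u 1/\#\cI_{\DIKT,u}$ is just a reformulation of the paper's $\IDIKT=2\JDIKT-\#\cT$, and the substantive steps coincide — the partition \eqref{eq:AB}, Lemma~\ref{lem:isoD} turning $\F_q$-isomorphism into the quadratic-character condition at the cube roots of $\psi(u)$, the $24{:}1$ map from affine points of $\cC$ with $yzw\ne 0$ (with the $25$ boundary points), the Legendre curve $\cL$ for $q\equiv 2\pmod 3$, and the $q\bmod 8$ conditions governing both the $j=1728$ classes and the $\cB_2$-correction to $\hA_3$. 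The formulas you derive in each residue class agree with the theorem.
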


\begin{proof}
Let $\cJ_\DIKT$ be the set of $j$-invariants of the elliptic curve $\Eu$ for all $u\in \F_q \ba
\set{0,9/4}$. For $a\in \F_q$, let $i_{\DIKT}(a)$ be the set of distinct
$\F_q$\nobreakdash-isomorphism classes of elliptic curves $\Eu$ with $j(\Eu)=a$. Clearly,
$\#i_{\DIKT}(a)=0$, if $a \not\in \cJ_\DIKT$. Therefore, we have
$$ \IDIKT=\sum_{a\in \F_q}
\#i_{\DIKT}(a)=\sum_{a\in \cJ_\DIKT} \#i_{\DIKT}(a)\enspace. $$ Notice that, for $a\in \cJ_\DIKT$,
the cardinality of the set $i_{\DIKT}(a)$ equals $1$ or $2$. Let $\cT$ be the set of all $a\in
\cJ_\DIKT$ where all elliptic curves $\Eu$ with $j$-invariant $a$ are $\F_q$-isomorphic to each
others. In other words, we have
$$\cT=\set{a~:~ a\in \cJ_\DIKT~,~ \#i_{\DIKT}(a)=1}.$$
Then,
$\#i_{\DIKT}(a)=2$ if $a\in\cJ_\DIKT \ba \cT$. Thus,
\begin{equation*}
\begin{aligned}
\IDIKT=\sum_{a\in \cJ_\DIKT} \#i_{\DIKT}(a) &= \sum_{a\in \cT}
\#i_{\DIKT}(a)+ \sum_{a\in \cJ_\DIKT \ba \cT} \#i_{\DIKT}(a)\\
&= \sum_{a\in \cT} 1+ \sum_{a\in \cJ_\DIKT \ba \cT} 2= \#\cT+2(\#\cJ_\DIKT-\#\cT)\\
&=2\JDIKT- \#\cT
\enspace.
\end{aligned}
\end{equation*}

Next, we compute the cardinality of the set $\cT$. We recall the partition of the set
$\F_q\setminus \set{0,9/4}$ into
\begin{equation*}
\cA_1\cup \cA_2\cup \cA_3 \cup \cB_1 \cup \cB_2,
\end{equation*}
given by \eqref{eq:AB}. For $i=1,2,3$, let
$$\wA_i=\set{u~:~ u \in \cA_i,~j(\Eu) \in \cT},$$
and for $i=1,2$ we let
$$\wB_i=\set{u~:~ u \in \cB_i,~j(\Eu) \in \cT}.$$
Then, from \eqref{eq:Ju}, we have
\begin{equation}
\label{eq:T} \#\cT={\# \wA_1+ \# \wB_1} + \frac{\# \wA_2 +\# \wB_2}{2}
+ \frac{\#\wA_3}{4}.
\end{equation}

If $u=2\in \cB_1$ then $j(\Eu)=0$ and $\cJ_{\DIKT,u}=\set{u}$. Thus, $\#i_{\DIKT}(0)=1$ and $\#
\wB_1=1$. If $u\in \cB_2$ i.e., $2u^2-6u+3=0$, then $j(\Eu)=1728$ and $\cJ_{\DIKT,u}=\set{u,
-u+3}$. The short Weierstrass model of $\Eu$ is given by~\eqref{eq:WT} as $\Wu:~\by^2 = \bx^3
+a_u\bx $, where $a_u=-2^43^5u(u-2)$. We see that $a_u=-a_{-u+3}$. So, the elliptic curves $\Eu$
and $\E_{\DIKT, -u+3}$ are $\F_q$-isomorphic if and only if
%$\chi_4(-1)=1$.
$-1=w^4$ for some $w \in \F^*_q$. The latter is equivalent to have $q\equiv 1 \pmod 8$. Also, we
recall from Table~\ref{Tab AB-DIKT}, that
$$
\#\cB_2= \left\{ \begin{array}{ll} 2, & \text{ if } q \equiv \pm 1 \pmod {12},\\0,
& \text{ if } q \not\equiv \pm 1 \pmod {12}.
\end{array}
\right.
$$
So, we have
$$
\#i_{\DIKT}(1728)= \left\{
\begin{array}{ll}
2, & \text{ if } q \equiv 11,13,23 \pmod {24},\\
1, & \text{ if } q \equiv 1 \pmod {24},\\
0, & \text{ if} q \not\equiv \pm 1 \pmod {12},
\end{array}
\right.
$$
and
$$
\#\wB_2= \left\{
\begin{array}{ll}
2, & \text{ if } q \equiv1 \pmod {24},\\
0, & \text{ if } q \not\equiv 1 \pmod {24}.
\end{array}
\right.
$$

If $u\in \cA_1$, by~\eqref{eq:Ju} we have
$\#\cJ_{\DIKT,u}=1$. So, $\#i_{\DIKT}(a)=1$, where
$a=j(\Eu)$. Therefore, for all $u\in \cA_1$ we have
$j(\Eu) \in \cT$. Thus
$$\wA_1=\cA_1.$$
If $u\in \cA_2$, then $u\in \F_q \ba \set{0,2,9/4}$ where
$q \equiv2 \pmod {3}$, and $2u^2-6u+3\ne 0$.
By~\eqref{eq:Ju} we have $\#\cJ_{\DIKT,u}=2$. More
precisely, we have $\cJ_{\DIKT,u}=\set{u,v}$, where
$v=-\frac{1}{3}(u-6-wu+\frac{2(u-3)}{w})$ and
$w^3=\psi(u)=2(4u-9)/u$ where $w\in \F_q$. Notice that
$w$ is uniquely determined by $u$, since
$q \equiv 2 \pmod {3}$.
Furthermore, from Lemma~\ref{lem:isoD},
$\Eu \cong_{\F_q} \E_{\DIKT,v}$ if and only if we have
$\chi_2\(3w(w+1)(w-2)\)=1$.So, $j(\Eu) \in \cT$ if and
only if $\chi_2\(3w(w+1)(w-2)\)=1$. Therefore,
\begin{equation*}
\wA_2=\set{u~:~u \in \cA_2,\; u=\frac{-18}{w^3-8},\; \chi_2\(3w(w+1)(w-2)\)=1}.
\end{equation*}
%\begin{equation*}
%\begin{aligned}
%\wA_2=\{ u \in \F_q\setminus \set{0,2,9/4}~:~& 2u^2-6u+3\ne0,~ u=\frac{-18}{w^3-8}, \;
%\chi_2\(3w(w+1)(w-2)\)=1,\\ & \text{ and } q \equiv 2\pmod 3 \}.
%\end{aligned}
%\end{equation*}
Let $L$ be
the elliptic curve over $\F_q$ given by
$L : Y^2=3X(X+1)(X-2)$.
Let $P=(x,y)$be a point of $L(\F_q)$ with $y\ne 0$. Let $u=\frac{-18}{x^3-8}$.
Then, $u \ne 0, 2, 9/4$. Furthermore, $2u^2-6u+3\ne 0$.
Since if $2u^2-6u+3=0$, then $3$ is a quadratic
residue in $\F_q$ where $q \equiv 11 \pmod {12}$.
And, we have $x^2+2x-2=0$ and
$3x(x+1)(x-2)=-9x^2$, which contradicts
$\chi_2(-1)=-1$. Therefore, $u\in \cA_2$ and $u \in \wA_2$.
In other word, for $q \equiv 2 \pmod 3$,
\begin{equation*}
\wA_2=\set{u~:~u=\frac{-18}{x^3-8},\; (x,y) \in
L(\F_q), y \ne 0}.
\end{equation*}
The latter means,
$\#\wA_2=(\#L(\F_q)-4)/2$ if $q \equiv 2 \pmod 3$.
In addition,the elliptic curve $L$ is $\F_q$-isomorphic
to the Legendre curve $\cL$ and we have
\begin{equation*}
\#\wA_2= \left\{ \begin{array}{ll}
0, & \text{ if } q \equiv 1 \pmod {3} \\
(\#\cL(\F_q)-4)/2,& \text{ if } q \equiv 2 \pmod {3}\\
\end{array}
\right.
\end{equation*}

If $u\in \cA_3$, then $u\in \F_q \ba \set{0,2,9/4}$ where $q\equiv 1 \pmod {3}$, and $2u^2-6u+3\ne
0$, $\chi_3( \psi(u))=1$. From~\eqref{eq:Ju} we have $\#\cJ_{\DIKT,u}=4$. So, we write
$\cJ_{\DIKT,u}=\set{u,v_1,v_2,v_3}$, where for $i=1,2,3$,
$v_i=-\frac{1}{3}(u-6-w_iu+\frac{2(u-3)}{w_i})$ and $w_i^3=\psi(u)=2(4u-9)/u$. Here, $w_1, w_2,
w_3$ are cubic roots of $\psi(u)$ in $\F_q$. By Lemma~\ref{lem:isoD}, $\Eu \cong_{\F_q}
\E_{\DIKT,v_i}$ if and only if we have $\chi_2\(3w_i(w_i+1)(w_i-2)\)=1$. So, $j(\Eu) \in \cT$ if
and only if $\chi_2\(3w_i(w_i+1)(w_i-2)\)=1$ for $i=1,2,3$. For simplicity, we let $w=w_1$. And, we
have $w_2=\zeta w$, $w_3=\zeta^2 w$. Therefore,
\begin{equation*}
\wA_3=\set{u~:~u\in \cA_3,\; u=\frac{-18}{w^3-8},\; \chi_2\(3\zeta^i w(\zeta^iw+1)(\zeta^i
w-2)\)=1,\; i=0,1,2}.
\end{equation*}
Let
\begin{equation*}
\hA_3= \set{ u~ : ~ u=\frac{-18}{x^3-8}, \; (x,y,z,w) \in \cC(\F_q),\, yzw \ne 0 }.
\end{equation*}
Clearly, $\hA_3 \subset \wA_3$. Now, let $P=(x,y,z,w)$ be an affine point of $\cC(\F_q)$ with
$yzw\ne 0$. So, $x \ne 0, -\zeta^i, 2\zeta^i$, for $i=0,1,2$. Let $u=\frac{-18}{x^3-8}$. Then, $u
\ne 0, 2, 9/4$. So, $u \in \wA_3$ if $2u^2-6u+3 \ne 0$. Thus,
$$\wA_3=\hA_3 \ba \cB_2.$$
In addition, if $2u^2-6u+3 =0 $ then $3$
is a quadratic residue in $\F_q$ where $q \equiv 1 \pmod{12}$.
And, we have $x^2+2x-2=0$. Then,
$3x(x+1)(x-2)=9x^2\beta^2$ and for i=1,2,
$$3\zeta^i x (\zeta^i x+1)(\zeta^i x-2)= 9x^2 \zeta^{2i} \beta^{2i-1}, $$
where $\beta=(x+1)(2\zeta+1)/3$. Notice that $\beta^2=-1$. Then
\begin{equation*}
\chi_2\(3\zeta^i x(\zeta^i x+1)(\zeta^i x-2)\)=
\left\{ \begin{array}{ll} 1, & \text{ if } i=0, \\
\chi_2(\beta), & \text{ if } i =1,2.
\end{array}
\right.
\end{equation*}
We note that $\chi_2(\beta)=1$
if and only if $q\equiv 1 \pmod 8$. Therefore, we have
$2u^2-6u+3 =0$ if and only if $q \equiv 1 \pmod {24}$
and in this case $u \in \hA_3$ and $u \not\in \wA_3$.
So, we have
\begin{equation*}
\#\wA_3= \left\{
\begin{array}{ll} \#\hA_3-2, & \text{ if } q \equiv 1
\pmod {24} \\ \#\hA_3, & \text{ if } q \equiv 7,13
\pmod {24} \\0, & \text{ if } q \equiv 2 \pmod {3} \\
\end{array}
\right.
\end{equation*}
We consider the map $ \tau : \cC(\F_q) \rightarrow \F_q$ by $\tau(P)=x^3$ that is a $24:1$map for
affine points $P=(x,y,z,w)$ with $yzw\ne 0$. And, there are 25 points $P=(x,y,z,w)$ on $\cC(\F_q)$
with $yzw=0$. So, $\#\hA_3=(\#\cC(\F_q)-25)/24$.

In summary, the cardinalities of sets $\wA_i$, for $i=1,2,3$, and $\wB_i$ , for $i=1,2$, are
provided in Table~\ref{Tab ABt-DIKT}.
\begin{table*}[ht]
\begin{center}
$\begin{array}{|c|c|c|c|c|c|c|c|}
\hline \phantom{\int_{x_{x_1}}^{x^2}} & \;\#\wA_1 \;& \; \#\wA_2\; & \;\#\wA_3 \; & \; \#\wB_1 \; & \; \#\wB_2 \;\\
\hline \hline
\phantom{\int_{x_{x_1}}^{x^2}}\hspace{-20pt} q \equiv 1 \pmod {24}\; & \;\frac{2(q-1)}{3} \;& \; 0 \;& \;
\frac{\#\cC(\F_q)-25}{24}-2 \; & 1& 2 \\
\hline
\phantom{\int_{x_{x_1}}^{x^2}}\hspace{-20pt} q \equiv 13 \pmod {24} & \; \frac{2(q-1)}{3} \;& \; 0 \;& \;
\frac{\#\cC(\F_q)-25}{24}\; & 1& 0 \\
\hline
\phantom{\int_{x_{x_1}}^{x^2}}\hspace{-20pt} q \equiv 7 \pmod {12} \;\;& \; \frac{2(q-1)}{3} \;& \;0 \;& \;
\frac{\#\cC(\F_q)-25}{24} \;& 1& 0 \\
\hline
\phantom{\int_{x_{x_1}}^{x^2}}\hspace{-20pt} q \equiv 2 \pmod {3}\quad & \; 0 \;& \; \frac{\#\cL(\F_q)-4}{2}
\;& \; 0 \;& 1 &0 \\
\hline
\end{array}$
\end{center}
\vspace{-1mm}
\caption{Cardinalities of the sets $\wA_1, \wA_2, \wA_3, \wB_1,\wB_2$.}
\label{Tab ABt-DIKT}
\end{table*}

Now, recall that $\IDIKT=2\JDIKT- \#\cT$. Next, using
Theorem~\ref{thm:numj DIKT} and \eqref{eq:T}, we
compute $\IDIKT$ and conclude the proof.

\end{proof}

%-------------------------------------------------------------

%***********************************************
\section{Doubling Doche-Icart-Kohel curves}

\label{sec: DIKD-curves}

\subsection{Preliminaries}
We consider the curves $\E_{\DIKD,u}$ given by~\eqref{eq:DIKD} over a finite field $\F_q$ of
characteristic~$p\ge 3$. Notice, $u \notin \set{0,64}$, since the curve $\E_{\DIKD,u}$ is
nonsingular. The $j$-invariant $j(\E_{\DIKD,u}) = F(u)$ where
\begin{equation}
\label{eq:map F-DIKD} F(U) = \frac{(U-48)^3}{U-64}.
\end{equation}
%Subsection###############################################################
\label{sec:DIKD} Let $G$ be the bivariate function
\begin{equation*}
\begin{split}
G(U,V) &= \frac{F(U)-F(V)}{U-V}  \\
 &= \frac{U^2V+UV^2-64U^2-208UV-64V^2+9216U+9216V-331776}{(U-64)(V-64)},
\end{split}
\end{equation*}
and $g$ be the numerator of $G$ given by
\begin{equation*}
g(U,V)=U^2V+UV^2-64U^2-208UV-64V^2+9216U+9216V-331776.
\end{equation*}
For $u\in \F_q \backslash \set{0,64}$ we define the polynomial $g_u$ by $g_u(V)=g(u,V)$, and denote
it's discriminant by  $\Delta_u$ and the set of its $\F_q$ roots by $\cZ_u$. So, we have
\begin{equation}
\label{eq:gu }g_u(V)=(u-64)V^2+(u^2-208u+9216)V+(-64u^2+9216u-331776),
\end{equation}
\begin{equation*}
\Delta_u=u(u-64)(u-48)^2,
\end{equation*}
\begin{equation*}
\mathcal{Z}_u =\{v \in \mathbb{F}_q : g_u(v)=0\}.
\end{equation*}
%The discriminant $\Delta_u$ of the polynomial $g_u$ is given by

Recall that two elliptic curves $\E_{\DIKD,u}$ and $\E_{\DIKD,v}$ are isomorphic over
$\overline{\mathbb{F}}_q$ if and only if they have the same $j$-invariant. In other words, they are
$\F_q$-isomorphic if and only if $g_u(v)=g(u,v)=0$, where $u\ne v$.

For $u \in \mathbb{F}_q \backslash \{0,64\}$, let
$$\mathcal{J}_u=\set{\E_{\DIKD,v}:\; v \in \mathbb{F}_q \backslash \{0,64\}, \; \E_{\DIKD,v} \cong_{\overline{\F}_q} \E_{\DIKD,u}},
$$
$$\mathcal{I}_u=\set{\E_{\DIKD,v}:\; v \in \mathbb{F}_q \backslash \{0,64\}, \; \E_{\DIKD,v} \cong_{\F_q} \E_{\DIKD,u}},
$$
and let $\mathcal{N}_q$ be the set of all $\mathbb{F}_q$-isomorphisms between distinct elliptic
curves in the family~\eqref{eq:DIKD} and let $n_q= \# \mathcal{N}_q$. Similarly, let
$\overline{n}_q= \# \overline{\mathcal{N}}_q$, where $\overline{\mathcal{N}}_q$ is the set of all
$\overline{\mathbb{F}}_q$-isomorphisms between distinct elliptic curves in this family.
%\begin{equation*}
%\overline{\mathcal{N}}_q=\{\psi |\; \exists u,v \in \F_q \hspace{1mm} s.t. \hspace{1mm} u \neq v
%\hspace{1mm}, \hspace{1mm} \psi \text{ \textit{is an $\overline{\mathbb{F}}_q$-isomorphism from
%$\E_{\DIKD,u}$ to $\E_{\DIKD,v}$}} \},
%\end{equation*}
%\begin{equation*}
%\mathcal{N}_q=\{\psi |\; \exists u,v \in \F_q \hspace{1mm} s.t. \hspace{1mm} u \neq v \hspace{1mm},
%\hspace{1mm} \psi \text{ \textit{is an $\mathbb{F}_q$-isomorphism from $\E_{\DIKD,u}$ to
%$\E_{\DIKD,v}$}} \},
%\end{equation*}
%\begin{equation*}
%\overline{n}_q= \# \overline{\mathcal{N}}_q, \hspace{5mm} n_q= \# \mathcal{N}_q,
%\end{equation*}
For $i=1,2,3$, let
$$c_i=\#\set{\mathcal{J}_u : u\in\mathbb{F}_q\backslash\{0,64\}, \; \#\mathcal{J}_u=i}.$$

\vspace*{2mm}
\subsection{Number of $\overline{\F}_q$-isomorphism classes in family~\eqref{eq:DIKD}}
In this section, we compute the number of distinct doubling Doche-Icart-Kohel curves up to twists.
%In other words, we obtain the exact number of elliptic curves defined over $\F_q$ with a rational $2$-torsion subgroup up to twists.

For $u=0,64,48$, where $\Delta_u$ equals zero, we have
$$
g_0(V)=-64(V - 72)^2, \;\; g_{64}(V)=-4096,
$$
$$
g_{48}(V)=-16(V - 48)^2, \;\; \mathcal{J}_{48}=\{\E_{\DIKD,48}\}.
$$
Also for $u=72$ (appeared in $g_0$), we have
$$
g_{72}(V)=8(V - 72)V, \;\; \mathcal{J}_{72}=\{\E_{\DIKD,72}\}.
$$
For all other values of $u$, $\#\mathcal{J}_u=3$ if and only if $g_u$ has roots in $\mathbb{F}_q$
that is equivalent to where $\Delta_u$, the discriminant of $g_u$, is a quadratic residue in
$\mathbb{F}_q$.

%\begin{lemma}
%For all $u \in \mathbb{F}_q \backslash \{0,48,64,72\}$, we have
%$$\#\mathcal{Z}_u=1+\chi_2\({u \over u-64}\).$$
%\end{lemma}
%\begin{proof}
%For every $u \in \mathbb{F}_q \backslash \{0,48,64,72\}$, the number $\#\mathcal{Z}_u$ equals the number of
%$\F_q$ roots of the polynomial $g_u$. Therefore
%$$\#\mathcal{Z}_u=1+\chi_2\(\Delta_u\)=1+\chi_2\(\dfrac{u}{u-64}\).$$
%\end{proof}
%\vspace*{4mm}
%
%%
%%Every class $\mathcal{Z}_u$ has either 1 or 3 members because of the correspondence between
%%$\mathcal{Z}_u \backslash {u}$ and square roots of $u/(u-64)$. However some values of $u$ like $
%%48, 72$ are troublesome. The following lemma counts the number of classes of cardinality 3.
%

%################## Lemma 4 ##################################
\begin{lemma}
\label{lem:c_3} The number $c_3$ of $\mathbb{F}_q$-isomorphism classes of cardinality three is
given by
$$
c_3= \left\{
\begin{array}{ll}
\displaystyle \;\;{\frac{q-3}{6}} &  \text{ if }  q \equiv 0 \pmod
3,\vspace{3pt}\\
\displaystyle \;\;{\frac{q-7}{6}} &  \text{ if }  q \equiv 1 \pmod
3,\vspace{3pt}\\
\displaystyle \;\;{\frac{q-5}{6}}  & \text{ if }  q \equiv 2 \pmod 3.
\end{array}
\right.
$$
\end{lemma}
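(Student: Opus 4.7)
The plan is to translate the count into a classical character sum. Each $\overline{\F}_q$-isomorphism class of size three corresponds to three distinct values of $u$ in $\F_q \setminus \{0, 64\}$, so $3 c_3$ equals the number of $u$ with $\#\mathcal{J}_u = 3$. By the preliminaries, this is the number of $u \in \F_q \setminus \{0, 64\}$ for which the quadratic $g_u(V)$ has two distinct roots in $\F_q$, all avoiding $u$, $0$, and $64$.

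First I would record the relevant properties of $g_u$: its leading coefficient is $u-64 \neq 0$; its discriminant is $\Delta_u = u(u-64)(u-48)^2$; and the boundary values are $g_u(0) = -64(u-72)^2$ and $g_u(64) = -4096$. Differentiating $F(U) = (U-48)^3/(U-64)$ gives $F'(U) = 2(U-48)^2(U-72)/(U-64)^2$, and the identity $g_u(u) = F'(u)(u-64)^2$ shows (in characteristic $\neq 2, 3$) that $u$ is a root of $g_u$ exactly when $u \in \{48, 72\}$. Consequently, for every $u \in \F_q \setminus \{0, 48, 64, 72\}$ the two roots of $g_u$ in $\overline{\F}_q$ are distinct and automatically miss $\{0, 64, u\}$; they lie in $\F_q$ exactly when $\chi_2(u(u-64)) = 1$, since $(u-48)^2$ is then a nonzero square. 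The excluded values $u = 48$ and $u = 72$ give size-$1$ classes, as the explicit factorizations $g_{48}(V) = -16(V-48)^2$ and $g_{72}(V) = 8V(V-72)$ already show.

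I would then compute $\sum_{u \in \F_q} \chi_2(u(u-64)) = -1$ by completing the square $u(u-64) = (u-32)^2 - 1024$ and invoking the standard identity $\sum_w \chi_2(w^2 - a) = -1$ for $a \in \F_q^*$; this yields $\#\{u \in \F_q \setminus \{0,64\} : \chi_2(u(u-64)) = 1\} = (q-3)/2$. Since $\chi_2(48 \cdot (-16)) = \chi_2(-3)$ equals $1$ iff $q \equiv 1 \pmod 3$ while $\chi_2(72 \cdot 8) = \chi_2(576) = 1$ always, subtracting the exceptional contributions gives $3 c_3 = (q-7)/2$ for $q \equiv 1 \pmod 3$ and $3 c_3 = (q-5)/2$ for $q \equiv 2 \pmod 3$. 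In characteristic $3$ the points $48$ and $72$ collapse into the already-excluded $0$, and the simplified derivative $F'(U) = -U^3/(U-1)^2$ vanishes only at $u=0$, so no exceptional subtraction survives; the analogous character sum with $\Delta_u = u^3(u-1)$ reduces to $\chi_2(u(u-1)) = 1$ and yields $3 c_3 = (q-3)/2$.

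The main obstacle I expect is the bookkeeping around the two exceptional points $u = 48$ and $u = 72$: the three-way split in the statement arises precisely because $\chi_2(-3)$ depends on $q \pmod 3$ while $\chi_2(576) = 1$ unconditionally, and characteristic $3$ must be isolated from the start because it absorbs both exceptional points into the already-excluded $u = 0$.
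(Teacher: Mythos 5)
Your proof is correct and follows essentially the same route as the paper: both reduce the count to the number of $u\in\F_q\setminus\{0,64\}$ with $u(u-64)$ a nonzero square and then remove the exceptional values $u=48,72$, whose contribution is governed by $\chi_2(-3)$ and $\chi_2(576)$ exactly as you describe. The only cosmetic difference is that you evaluate the character sum $\sum_u\chi_2(u(u-64))=-1$ by completing the square, whereas the paper counts the $q-1$ points of the conic $z^2=u(u-64)$ and subtracts the rational points lying over $u=0,64,48,72$; these are the same computation, and your extra verification that the roots of $g_u$ avoid $0$, $64$, and $u$ (via $g_u(0)$, $g_u(64)$, and $g_u(u)=F'(u)(u-64)^2$) is a welcome addition.
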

\begin{proof}
In order to find the number of $\overline{\mathbb{F}}_q$-isomorphism classes of cardinality three,
we need to enumerate $u \in \F_q\backslash\{0,64\}$ so that $\Delta_u$ is a quadratic residue in
$\mathbb{F}_q$. We instead count the number of points $(u,z)$ of the curve
\begin{equation*}
u(u-64)=z^2
\end{equation*}
except for the points with $u=0,64,48,72$, i.e. $\mathcal{O}\cap\mathbb{F}^2_q$, where
$$\mathcal{O}=\{(0,0),(64,0),(72,\pm 24),(48,16\sqrt{-3})\}.$$
The above curve has $q-1$ points. However, for every $u\in\mathbb{F}_q\backslash\{0,48,64,72\}$ that
$\Delta_u$ is a quadratic residue, there are two possible values of $z$. Additionally, every
$\overline{\mathbb{F}}_q$-isomorphism class of cardinality three is counted three times. Hence
$$
c_3=\dfrac{q-1-\#(\mathcal{O}\cap\mathbb{F}^2_q)}{6}.\
$$
\end{proof}
%################## Theorem 5 #####################################
\begin{theorem}
\label{thm:numj DIKD} For any finite field $\F_q$ of characteristic $p\ge 3$, for the number
$\JDIKD$ of distinct values of the $j$-invariant of family~\eqref{eq:DIKD}, we have
$$
\JDIKD= \left\{
\begin{array}{ll}
\displaystyle \;\;{\frac{2q-3}{3}}  & \text{ if }  q \equiv 0 \pmod 3
,\vspace{3pt}\\
\displaystyle \;\;{\frac{2q+1}{3}} &  \text{ if }  q \equiv 1 \pmod
3,\vspace{3pt}\\
\displaystyle \;\;{\frac{2q-1}{3}}  & \text{ if }  q \equiv 2 \pmod 3.
\end{array}
\right.
$$
\end{theorem}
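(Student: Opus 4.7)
The plan is to determine $\JDIKD$ from $c_1, c_2, c_3$ and the already-proved Lemma~\ref{lem:c_3}. Partitioning the $q-2$ admissible parameters $u \in \F_q \setminus \set{0,64}$ by the size of their $\overline{\F}_q$-isomorphism class gives
$$
c_1 + 2c_2 + 3c_3 = q-2, \qquad \JDIKD = c_1 + c_2 + c_3,
$$
so $\JDIKD = q-2-c_2-2c_3$. The theorem therefore reduces to showing $c_2 = 0$.

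A class $\cJ_u$ has size $2$ iff $g_u(V)$ contributes \emph{exactly one} root $v \in \F_q \setminus \set{0, 64, u}$. To detect this I would evaluate $g_u$ at the three forbidden values $V = 0, 64, u$. Straightforward computation gives $g_u(0) = -64(u-72)^2$ and $g_u(64) = -4096$; using the identity $g(u,u) = F'(u)(u-64)^2$ together with $F'(U) = 2(U-48)^2(U-72)/(U-64)^2$ one obtains $g_u(u) = 2(u-48)^2(u-72)$. In characteristic $p \ne 3$ these three expressions vanish only at $u = 72$, never, and $u \in \set{48,72}$ respectively; in characteristic $3$ they reduce to $-u^2, -1, -u^3$, which vanish only at $u = 0$ (excluded). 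Hence in every characteristic the only admissible parameters at which a root of $g_u$ can land on one of $0, 64, u$ are $u = 48$ and $u = 72$ (and only when $p \ne 3$).

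For those two exceptional values I would check by hand: $g_{48}(V) = -16(V-48)^2$ has its double root at $u$ itself, and $g_{72}(V) = 8V(V-72)$ has roots $0$ and $u$, both forbidden. Hence $|\cJ_{48}| = |\cJ_{72}| = 1$, matching the direct calculations recorded before Lemma~\ref{lem:c_3}. For every other admissible $u$ one has $\Delta_u = u(u-64)(u-48)^2 \ne 0$ and no forbidden root of $g_u$, whence $|\cJ_u| \in \set{1,3}$ according as $\Delta_u$ is a non-residue or a residue in $\F_q^*$. Consequently $c_2 = 0$ for every characteristic $p \ge 3$, and substituting Lemma~\ref{lem:c_3} into $\JDIKD = q - 2 - 2c_3$ yields $(2q-3)/3$, $(2q+1)/3$, and $(2q-1)/3$ in the three residue classes of $q$ modulo $3$. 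The main obstacle is the $c_2 = 0$ analysis: one must confirm that the only candidates for size-$2$ collapse are $u = 48, 72$ (in char $\ne 3$) and that in each of those cases the putative second partner coincides with a forbidden value of $V$.
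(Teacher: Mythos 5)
Your proof is correct and takes essentially the same route as the paper: reduce to $\JDIKD = c_1 + c_3 = q-2-2c_3$ by showing $c_2=0$, then substitute Lemma~\ref{lem:c_3}. Your explicit check that $c_2=0$ — evaluating $g_u(0)=-64(u-72)^2$, $g_u(64)=-4096$, and $g_u(u)=2(u-48)^2(u-72)$ to confirm that only $u=48,72$ can lose a root to a forbidden value, each already of class size one — merely makes precise a step the paper asserts after inspecting $u=0,48,64,72$, and all of these computations are correct.
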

\vspace*{1mm}
\begin{proof}
Since there are only classes of cardinality three and one, we have
\begin{equation*}
c_1= q-2-3c_3.
\end{equation*}
Replacing the values of $c_3$ according to lemma \eqref{lem:c_3} in the following equation completes the
proof.
\begin{equation*}
\JDIKD=c_1+c_3=q-2-2c_3
\end{equation*}
%Hence
%$$
%\JDIKD=c_1+\frac{c_2}{3}= \left\{
%\begin{array}{ll}
%\displaystyle \;\;{\frac{2q-3}{3}}  & \text{ if }  q \equiv 0 \pmod 3
%,\vspace{3pt}\\
%\displaystyle \;\;{\frac{2q+1}{3}} &  \text{ if }  q \equiv 1 \pmod
%3,\vspace{3pt}\\
%\displaystyle \;\;{\frac{2q-1}{3}}  & \text{ if }  q \equiv 2 \pmod 3.
%\end{array}
%\right.
%$$
\end{proof}
\subsection{Number of $\F_q$-isomorphism classes in family~\eqref{eq:DIKD}}

Here, we provide the explicit formulas for the number of $\F_q$-isomorphism classes of doubling
Doche-Icart-Kohel curves. The estimate for the number $\IDIKD$ of $\F_q$-isomorphism classes of
this family is given by
$$ \IDIKD = (19/24)q  + O(\sqrt{q}).$$
In order to have $\IDIKD$ explicitly, we compute the number of those $\overline{\F}_q$-isomorphism
classes where split into two $\F_q$-isomorphism classes.

\begin{lemma}
\label{lem:alpha} Let $p \ge 3$. For every $u , v \in \mathbb{F}_q \backslash \{0,64\} $ that
$u\neq v$ and $\E_{\DIKD,u}\cong_{\overline{\mathbb{F}}_q}\E_{\DIKD,v}$, we have
$\E_{\DIKD,u}\cong_{{\mathbb{F}}_q}\E_{\DIKD,v}$ if and only if there are elements $a,b \in \mathbb{F}_q$ satisfying
\begin{equation}
\label{eq:S}
\mathcal{S}~ :~ 32a^2=\frac{b(b+32)}{b+24},
\end{equation}
with
\begin{equation}
\label{eq:a,b} \left\{
\begin{array}{ll}
\displaystyle \;{a^2v=u+3b}
,\vspace{3pt}\\
\displaystyle \;{u=\frac{-b^2}{b+16}}.
\end{array}
\right.
\end{equation}
\noindent In fact $(u,v)$ uniquely determines $(a^2,b)$ and vice versa.
\end{lemma}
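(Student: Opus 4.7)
The plan is to specialize the general isomorphism form recalled in Section~2 to the Weierstrass equations of $\E_{\DIKD,u}$ and $\E_{\DIKD,v}$. Since both equations have $a_1=a_3=a_6=0$, a comparison of the $y$ and $xy$ coefficients after the substitution $(x,y)\mapsto(\alpha^2x+r,\alpha^3y+\alpha^2sx+t)$ immediately forces $s=t=0$. Writing $a=\alpha$ and $b=r$, every $\F_q$-isomorphism $\E_{\DIKD,v}\to\E_{\DIKD,u}$ is therefore of the shape $(x,y)\mapsto(a^2x+b,a^3y)$ with $a\in\F_q^\ast$ and $b\in\F_q$.

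I would then plug this substitution into the equation for $\E_{\DIKD,u}$ and match the coefficients of $x^3,x^2,x^1,x^0$ against $a^6(x^3+vx^2+16vx)$. The constant term gives $b(b^2+ub+16u)=0$; the branch $b=0$ collapses the remaining relations to $u=v$, which is excluded by hypothesis, so one obtains $b^2+ub+16u=0$, equivalently $u=-b^2/(b+16)$ (with $b\neq-16$, since $b=-16$ would reduce the equation to $256=0$). The $x^2$ coefficient then yields $a^2v=u+3b$, giving both relations of \eqref{eq:a,b}. Substituting $u=-b^2/(b+16)$ into the $x^1$-coefficient equation $3b^2+2ub+16u=16a^4v$, and simplifying $a^2v$ via the $x^2$ relation to $2b(b+24)/(b+16)$, one clears the common denominator and divides by the nonzero $b$ to arrive at $32a^2(b+24)=b(b+32)$, which is exactly the equation $\mathcal{S}$ (the value $b=-24$ is inconsistent, as it would force $-192=0$).

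For the bijectivity claim, given $(a^2,b)$ satisfying $\mathcal{S}$ with $a\neq0$, the formulas \eqref{eq:a,b} uniquely define $(u,v)$; the constraints $u\notin\{0,64\}$ are automatic, since $b\in\{0,-32\}$ would force $a=0$ via $\mathcal{S}$. Conversely, given a pair $(u,v)$ with $\E_{\DIKD,u}\cong_{\overline{\F}_q}\E_{\DIKD,v}$ and $u\neq v$, the value $v$ is one of the (at most two) roots of $g_u$, and by the derivation above each such $v$ determines a unique $b$, hence a unique $a^2$. The main obstacle I expect is the algebraic bookkeeping in the $x^1$-coefficient step and confirming that the ratio simplifies cleanly to $\mathcal{S}$; the rest is routine case-checking of the degenerate values $b\in\{-16,-24,0,-32\}$.
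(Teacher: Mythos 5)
Your proof is correct and follows essentially the same route as the paper: reduce the general isomorphism to $(x,y)\mapsto(a^2x+b,a^3y)$, match coefficients to obtain the relations $a^2v=u+3b$, $16a^4v=3b^2+2bu+16u$, $b(b^2+ub+16u)=0$, and eliminate $u$ to reach $\mathcal{S}$ --- the paper merely states this system together with the excluded values $b\neq 0,-16,-24,-32$ and omits the elimination you carry out explicitly. The only quibble is that your exclusion of $b=-24$ via ``it would force $-192=0$'' is vacuous in characteristic $3$ (where $-192=0$ does hold), but there $-24\equiv 0$, so that case is already ruled out by $b\neq 0$.
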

\begin{proof}
Suppose $\E_{\DIKD,u}$ and $\E_{\DIKD,v}$ are $\overline{\F}_q$-isomorphic, according to
\cite{Silv} they are isomorphic over $\F_q$, if and only if there is an elliptic curve isomorphism
$\psi$ and elements $a,b \in \mathbb{F}_q$ such that
%two elliptic curves
%\begin{equation*}
%E_1:y^2=x^3+Ax^2+Bx+C
%\end{equation*}
%and
%\begin{equation*}
%E_2:y^2=x^3+A'x^2+B'x+C'
%\end{equation*}
%defined over $\mathbb{F}_q$ are isomorphic over an extension field $\mathbb{L}$ of $\F_q$, iff there are elliptic curve isomorphism $\psi$ and integers $a,b \in \mathbb{L}$ such that

$$
\left\{
\begin{array}{ll}
\displaystyle \;\;{\psi:E_1 \rightarrow E_2},
\vspace{3pt}\\
\displaystyle \;\;{\psi(x,y)=(a^2x+b,a^3y)},
\end{array}
\right.
$$
and
%$$
%\left\{
%\begin{array}{ll}
%\displaystyle \;\;{a^2A'=A+2b}
%,\vspace{3pt}\\
%\displaystyle \;\;{a^4B'=B+2bA+3b^2}
%,\vspace{3pt}\\
%\displaystyle \;\;{a^6C'=C+bB+b^2A+b^3}.
%\end{array}
%\right.
%$$
%Therefore if $\E_{\DIKD,u}$ and $\E_{\DIKD,v}$ are $\F_q$-isomorphic we should have
\begin{equation}
\label{eq:longman} \left\{
\begin{array}{ll}
\displaystyle \;\;{a^2v=u+3b}
,\vspace{3pt}\\
\displaystyle \;\;{16a^4v=16u+2bu+3b^2}
,\vspace{3pt}\\
\displaystyle \;\;{b(16u+bu+b^2)=0}.
\end{array}
\right.
\end{equation}
Since restrictions on $u$ and $v$, and properties of $\psi$ imply that $a \neq 0$ and $b \neq 0,-16,-24,-32$, the proof is complete.
%\begin{equation*}
%\left\{
%\begin{array}{ll}
%\displaystyle \;\;{a^2v=u+3b}
%,\vspace{3pt}\\
%\displaystyle \;\;{16a^2=\frac{b(b+32)}{2(b+24)}}
%,\vspace{2pt}\\
%\displaystyle \;\;{u=\frac{-b^2}{b+16}}.
%\end{array}
%\right.
%\end{equation*}
%Proof of the converse is the converse of above proof.
%$a,b \in \mathbb{F}_q$, $b \neq 0,-16,-24,-32$ for  $u,v \neq 0,64$. A simple computation completes the proof.
\end{proof}

Now, one can see a one to one correspondence between points of $\mathcal{S}$ and
$\mathbb{F}_q$-isomorphisms of family \eqref{eq:DIKD} except for a few points. The exceptions are
the points $(0,0),(0,-32),(\pm i,-16)$ mentioned in the proof of Lemma~\eqref{lem:alpha}, and $(\pm
i,-48),(\pm (-3 \pm i \sqrt{3})+2,8(-3 \pm i \sqrt{3}))$ that are corresponded to isomorphisms of
the classes $\mathcal{J}_{72}$ and $\mathcal{J}_{48}$ respectively.

For $p>3$, the map
\begin{equation*}
f:(a,b)\longrightarrow \left(\frac{a(b+24)}{32},\dfrac{b}{32}\right)
\end{equation*}
injectively maps $\mathcal{S}$ to
$$
\gamma:a'^2=b'(b'+1)(b'+3/4).
$$
Furthermore, all the points on $\gamma$ are images of points on $\mathcal{S}$ except for
$(a',b')=(0,-3/4)$ that is corresponded to $b=-24$. Hence we have the set $\mathcal{E}$ of
exceptional points on $\gamma$
$$
\mathcal{E}=f(\{(0,0),(0,-32),(\pm i,-16),(\pm i,-48),(\pm (-3 \pm i \sqrt{3})+2,8(-3 \pm i
\sqrt{3}))\})\cup\{(0,-3/4)\}.
$$

If $p=3$, simplifying $\mathcal{S}$ we have the curve
\begin{equation*}
\gamma:a^2=-b+1
\end{equation*}
which has $q$ points. But similar to the previous situation, points listed below are not allowed.
\begin{equation*}
\mathcal{E}=\{(0,1),(\pm 1,0),(\pm i,-1)\}
\end{equation*}
In either case
\begin{equation*}
n_q= \# \gamma(\mathbb{F}_q) - \#\(\mathcal{E}\cap\mathbb{F}_q^2\).
\end{equation*}
%All of the points on $\gamma$ are correspondent to an isomorphism of $\mathcal{N}_q$ except for infinity, $(0,-3/4)$ and those in the following table.\\
%
%\hskip-1.2cm\begin{tabular}{|c|c|}
%\hline
% mod 12    &    extra points on $\gamma$ \\ \hline
% $P \equiv 1 $  & $(0,0),(0,-1),(0,-3/4),(\pm 3i/4,-3/2),(\pm i/4,-1/2),(\pm (\pm i\sqrt{3}-3)/8,(\pm i\sqrt{3} - 3)/4)$ \\ \hline
% $P \equiv 5  $  &    $(0,0),(0,-1),(0,-3/4),(\pm 3i/4,-3/2),(\pm i/4,-1/2)$                 \\ \hline
% $P \equiv 7  $  &    $(0,0),(0,-1),(0,-3/4),(\pm (\pm i\sqrt{3}-3)/8,(\pm i\sqrt{3} - 3)/4)$   \\ \hline
% $P \equiv 11 $  &    $(0,0),(0,-1),(0,-3/4)$  \\ \hline
%\end{tabular}
%
%\vspace{.5cm}
%\noindent
%If $p=3$, simplifying $\mathcal{S}$, we define the curve
%\begin{equation*}
%L_2:a'^2=1-b'.
%\end{equation*}
%Only points on $L_2$ out of being correspondent to a $\F_q$-isomorphism  of $\mathcal{N}_q$ are $(\pm 1,0)$ and $(0,1)$. Hence
Therefore,
\begin{equation*}
\begin{split}
n_q= \left\{
\begin{array}{ll}
\displaystyle \;\;{\#\gamma(\mathbb{F}_q)-3-2\chi_2(-1)}  & \text{ if }    p = 3
,\vspace{3pt}\\
\displaystyle \;\;{\#\gamma(\mathbb{F}_q)-3-4\chi_2(-1)-4\chi_2(-3)}  & \text{ if }    p \neq 3.
\end{array}
\right.
\end{split}
\end{equation*}
So,
\begin{equation*}
\begin{split}
n_q= \left\{
\begin{array}{ll}
\displaystyle \;\;{q-5}  & \text{ if }    q = 3^{2k}
,\vspace{3pt}\\
\displaystyle \;\;{q-3}  & \text{ if }    q = 3^{2k+1}
,\vspace{3pt}\\
\displaystyle \;\;{\#\gamma(\mathbb{F}_q)-11} & \text{ if }    q \equiv 1 \pmod {12}
,\vspace{3pt}\\
\displaystyle \;\;{\#\gamma(\mathbb{F}_q)-7}  & \text{ if }    q \equiv 5,7 \pmod {12}
,\vspace{3pt}\\
\displaystyle \;\;{\#\gamma(\mathbb{F}_q)-3}  & \text{ if }    q \equiv 11 \pmod {12}.
\end{array}
\right.
\end{split}
\end{equation*}

In lemma~\eqref{lem:alpha}, whether $\E_{\DIKD,u}$ and $\E_{\DIKD,v}$ are $\F_q$-isomorphic or not, equations \eqref{eq:S} and \eqref{eq:a,b} have two pairs of solutions $(\pm a,b)\in \overline{\F}^2_q$ that are correspondent to the two $\overline{\F}_q$-isomorphisms from $\E_{\DIKD,u}$ to $\E_{\DIKD,v}$. %If $a^2$ is a complete square in $\mathbb{F}_q$, the curves are also
%$\mathbb{F}_q$-isomorphic ($(\pm a,b)\in \F_q$) and there will be two $\mathbb{F}_q$-isomorphisms between them
%correspondent to these pairs.
Therefore, there are twelve $\overline{\mathbb{F}}_q$-isomorphisms
between distinct curves in a cardinality three $\overline{\mathbb{F}}_q$-isomorphism class (two isomorphisms for each two-permutation of three $\overline{\F}_q$-isomorphic curve). So
\begin{equation*}
\overline{n}_q=12c_3.
\end{equation*}

Since every two $\overline{\mathbb{F}}_q$-isomorphic curve are either $\mathbb{F}_q$-isomorphic or
nontrivial quadratic twists of each other, every $\overline{\mathbb{F}}_q$-isomorphism class of
size three either splits into one or two $\mathbb{F}_q$-isomorphism classes. Therefore, there are
either four or twelve $\mathbb{F}_q$-isomorphisms between distinct curves in every
$\overline{\mathbb{F}}_q$-isomorphism class of size three. In other words, every eight
$\overline{\mathbb{F}}_q$-isomorphisms that are not $\F_q$-isomorphisms, increase the number of
$\mathbb{F}_q$-isomorphism classes by one. Hence

\begin{equation}
\label{eq:last} \IDIKD - \JDIKD = \frac{\overline{n}_q - n_q}{8}.
\end{equation}
\begin{theorem}
\label{thm:numi DIKD} For any finite field $\F_q$ of characteristic $p\ge 3$, for the number
$\IDIKD$ of $\F_q$-isomorphism classes of the family~\eqref{eq:DIKD}, we have
$$
\IDIKD= \left\{
\begin{array}{ll}
\displaystyle{\;\; (11q+1)/12-N/8} &   \text{ if }  q \equiv 1 \pmod {12}, \vspace{2pt}\\
\displaystyle{\;\; (11q-7)/12-N/8} &   \text{ if }  q \equiv 5 \pmod {12}, \vspace{2pt}\\
\displaystyle{\;\; (11q-5)/12-N/8} &   \text{ if }  q \equiv 7 \pmod {12}, \vspace{2pt}\\
\displaystyle{\;\; (11q-13)/12-N/8} &   \text{ if }  q \equiv 11 \pmod {12},\\
\displaystyle{\;\; (19q-27)/24} &   \text{ if }  q =3^{2k},\\
\displaystyle{\;\; (19q-33)/24} &   \text{ if }  q =3^{2k+1},
\end{array}
\right.
$$
where $N$ is the number of $\F_q$-rational points on the elliptic curve
$$L: Y^2=X(X+1)(X+3/4).$$
\end{theorem}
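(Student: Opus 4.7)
The plan is to derive all formulas from equation~\eqref{eq:last}, namely
\[ \IDIKD = \JDIKD + (\overline{n}_q - n_q)/8, \]
by substituting the quantities that have already been prepared in this section. The theorem then becomes a case-by-case bookkeeping exercise, with no new conceptual input required.

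First, $\JDIKD$ is given directly by Theorem~\ref{thm:numj DIKD} in terms of $q\bmod 3$. Second, since each $\overline{\F}_q$-isomorphism class of size three contributes $3\cdot 2=6$ ordered pairs of distinct curves with two $\overline{\F}_q$-isomorphisms per pair, we have $\overline{n}_q = 12 c_3$, and Lemma~\ref{lem:c_3} yields $\overline{n}_q$ explicitly in each residue class. Third, the formulas for $n_q$ in terms of $\#\gamma(\F_q)$ were already assembled just before the theorem by a case analysis on $q\bmod 12$ and on whether $p=3$. The final identification to make is that, for $p>3$, the affine curve $\gamma:a'^2=b'(b'+1)(b'+3/4)$ is exactly the affine part of the elliptic curve $L:Y^2=X(X+1)(X+3/4)$ of the theorem, so that
\[ \#\gamma(\F_q) = N - 1, \]
where $N$ counts the $\F_q$-rational points of $L$ including the point at infinity.

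What remains is routine simplification in six cases. To illustrate, when $q\equiv 1\pmod{12}$ (so $q\equiv 1\pmod 3$) one has $\JDIKD=(2q+1)/3$, $c_3=(q-7)/6$, hence $\overline{n}_q=2q-14$, and from the table above $n_q=\#\gamma(\F_q)-11=N-12$; substituting gives $(\overline{n}_q-n_q)/8=(q-1)/4-N/8$, and therefore $\IDIKD=(11q+1)/12-N/8$. The residue classes $q\equiv 5,7,11\pmod{12}$ are treated identically, using in each case the matching pair $(c_3,n_q)$. In characteristic three the curve $\gamma$ degenerates to $a^2=1-b$, whose point count is $q$, so $N$ does not enter: one gets $n_q=q-5$ for $q=3^{2k}$ and $n_q=q-3$ for $q=3^{2k+1}$, and combining with $\JDIKD=(2q-3)/3$ and $\overline{n}_q=2q-6$ yields the two remaining formulas.

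The main conceptual work has already been carried out: Lemma~\ref{lem:alpha} parametrizes $\F_q$-isomorphisms by the curve $\mathcal{S}$, which is birationally transformed into $\gamma$; Lemma~\ref{lem:c_3} gives $c_3$; and the exceptional set $\mathcal{E}$ was analysed in each residue class to pass from $\#\gamma(\F_q)$ to $n_q$. Consequently the only point that requires care in the proof is the correct tally of $\#(\mathcal{E}\cap\F_q^2)$, which depends on $\chi_2(-1)$ and $\chi_2(-3)$ and therefore splits the characteristic-$\ne 3$ computation into the four classes $q\bmod{12}$; once this tally is in hand, the rest is arithmetic.
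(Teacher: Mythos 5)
Your proposal is correct and follows essentially the same route as the paper: the paper's own proof is exactly the substitution of $\JDIKD$, $\overline{n}_q=12c_3$, and the case-by-case values of $n_q$ into $\IDIKD=\JDIKD+(\overline{n}_q-n_q)/8$. Your explicit identification $\#\gamma(\F_q)=N-1$ (affine points versus the full point count of $L$) and the worked arithmetic in each residue class are consistent with, and indeed spell out more carefully than, what the paper leaves implicit.
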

\begin{proof}
According to \eqref{eq:last},
$$
\IDIKD=\JDIKD+\frac{\overline{n}_q - n_q}{8}.
$$
Replacing the values of $n_q$ and $\overline{n}_q$ completes the proof.
\end{proof}

%************************************************

\section{Conclusion}
In this paper, we give explicit formulas for the number of distinct elliptic curves over a finite
field, up to isomorphism, in two families~\eqref{eq:DIKT} and~\eqref{eq:DIKD} of curves introduced
by Doche, Icart and Kohel. For any finite field $\F_q$ of characteristic $p\ge 5$, for the number
$\JDIKT$ of $\overline{\F}_q$-isomorphism classes of the family~\eqref{eq:DIKT}, we have
\cite[Theorem 9]{FS}
$$ \JDIKT=
\left\{ \begin{array}{ll}
\displaystyle\fl{\frac{3q+1}{4}} & \text{ if } q \equiv 1 \pmod 3,\vspace{3pt}\\
\displaystyle \;\;{\frac{q-1}{2}} & \text{ if } q \equiv 2 \pmod 3.
\end{array}
\right.
$$
We recall~\cite{DIK} that for $\F_q$ of characteristic $p\ge 5$ the family~\eqref{eq:DIKT} is the
family of elliptic curves defined over $\F_q$ with a rational $3$-torsion subgroup (up to twists).
In other words the formulas provide the number of $\overline{\F}_q$-isomorphism classes of the
family of elliptic curves over $\F_q$ with a rational $3$-torsion point. In addition, an elliptic
curve with a point of order 3 is birationally equivalent to a generalized Hessian curve. The exact
number of distinct elliptic curves with a point of order 3 up to isomorphism is also provided by
\cite{Fa}. And, the the number of $\F_q$-isomorphism classes of the family of elliptic curves over
$\F_q$ with a rational $3$-torsion point is given by
$$
  \begin{cases}
    \fl{(3(q+3)/{4}}& \text{if $q \equiv 1 \pmod {3},$}\\
    \;\;{q-1} & \text{if $q \equiv 2 \pmod {3}.$}
  \end{cases}
$$
The interesting result of this paper is Theorem~\eqref{thm:numi DIKT} providing the estimate for
the number $\IDIKT$ of $\F_q$-isomorphism classes of this family by
$$ \IDIKT =
\left\{ \begin{array}{ll}
\displaystyle{\; (79/96)q } + O(\sqrt{q})   & \text{ if } q \equiv 1 \pmod {3},\vspace{2pt}\\
\displaystyle{\; (3/4)q} + O(\sqrt{q}) & \text{ if } q \equiv 2 \pmod {3}.
\end{array}
\right.
$$
Therefore, the family of Doche, Icart and Kohel does not cover all $\F_q$-isomorphism classes of
elliptic curves with a point of order 3. And, the more intrusting point is that the number $\IDIKT$
is estimated by Hasse-Weil bound.

For the family of doubling Doche, Icart and Kohel curves over any finite field $\F_q$ of
characteristic $p\ge 5$, Theorem~\eqref{thm:numj DIKD} provides the number $\JDIKD$ of
$\overline{\F}_q$-isomorphism classes of this family~\eqref{eq:DIKD} by
$$
\JDIKD=\fl{\frac{2q+1}{3}}.
$$
This family represents elliptic curves defined over $\F_q$ with a rational $2$-torsion subgroup (up
to twists). Moreover, the number of $\F_q$-isomorphism classes of the family of elliptic curves
over $\F_q$ with a rational $2$-torsion point is given by
$$
\left\{
\begin{array}{ll}
{ 2\fl{\frac{2q+1}{3}}+2}&   \text{ if } q \equiv 1 \pmod {4} ,\vspace{2pt}\\
{2\fl{\frac{2q+1}{3}} }&   \text{ if } q \not\equiv 1 \pmod {4}.\\
\end{array}
\right.
$$
And, Theorem~\eqref{thm:numi DIKT} gives the estimate for the number $\IDIKD$ of $\F_q$-isomorphism
classes of this family by
$$ \IDIKD = (19/24)q  + O(\sqrt{q}).$$
In conclusion, we studied interesting examples of families of elliptic curves with a small subgroup
where the number of $\F_q$-isomorphism classes of the family is not exactly given by the size of
the finite field and it is estimated by Hasse-Weil bound.

%
%\begin{remark}
%Theorem \eqref{thm:numi DIKD} along with Hasse theorem \cite{Silv} implies that, for every odd
%prime power $q$,
%\end{remark}
%

%There are also several more interesting families of Elliptic curves such as Montgomery curves
%~\cite{Mo}, Edwards curves \cite{Edw}, and their variants \cite{BerLan1,BBLP,BLRF}, and Doche,
%Icart and Kohel curves ~\cite{DIK}. The numbers of distinct $j$-invariants of the families of
%Edwards curves in \cite{Edw,BerLan1} as well as Doche, Icart and Kohel curves have been studied
%in~\cite[Theorems 3, 5 and 9]{FS}. Moreover, the explicit formulas for the numbers of
%$\F_q$-isomorphism classes of the families of Montgomery and Edwards curves are given in
%~\cite[Theorems 5,6 and 8]{Fa}. The proofs of the latter Theorems can be provided by our method.
%However, we refer to ~\cite{FMW} for the other proofs via different techniques.

%#########################################################

%-----------------------------------------------------------------------
%\vspace{5mm}

%\noindent \textbf{Acknowledgment.} The author would like tothank Igor Shparlinski for his interest
%and support of this work.
%The author thanks anonymous reviewers for their useful comments.
%\vspace{-7mm}

\end {document}